\documentclass[leqno]{amsart}
\usepackage{amsmath,amsfonts,amsthm,amssymb,indentfirst}

\setlength{\textwidth}{6.5in}
\setlength{\textheight}{9in}
\setlength{\evensidemargin}{0in}
\setlength{\oddsidemargin}{0in}
\setlength{\topmargin}{-.5in}

\setlength{\mathsurround}{1.67pt}
\newcommand{\<}{\kern.0833em}

\newtheorem{theorem}{Theorem}
\newtheorem{lemma}[theorem]{Lemma}
\newtheorem{corollary}[theorem]{Corollary}
\newtheorem{proposition}[theorem]{Proposition}
\newtheorem{definition}[theorem]{Definition}

\renewcommand{\r}{\mathrm}
\newcommand{\hy}{\mbox{\!-}}

\newcommand{\Id}{\mathrm{Id}}
\newcommand{\id}{\mathrm{id}}
\newcommand{\cId}{\r{ch}\mbox{-}\Id}
\newcommand{\cid}{\r{ch}\mbox{-}\id}

\def\Dj{\mbox{\raise0.3ex\hbox{-}\kern-0.4em D}}

\makeatletter
\newcommand{\xlabel}{\stepcounter{equation}
  \gdef\@currentlabel{\p@equation\theequation}{\rm(\@currentlabel)}}
\makeatother
\newenvironment{xlist}
  {\begin{list}{\xlabel}
    {\setlength{\rightmargin}{20pt}
     \setlength{\leftmargin}{37pt}
     \setlength{\labelsep}{20pt}
     \setlength{\labelwidth}{20pt}}}
  {\end{list}}

\raggedbottom 

\begin{document}

\begin{center}
\texttt{Comments, corrections, and related references welcomed, as
always!}\\[.5em]
{\TeX}ed \today
\vspace{2em}
\end{center}

\title{On lattices and their ideal lattices,
\mbox{and posets and their ideal posets}}
\thanks{The author is indebted to the referee for many helpful
corrections and references.\\
\hspace*{1.45em}After publication of this note, updates, further
references, errata, etc., if found, will be recorded at
http://math.berkeley.edu/\protect\linebreak[0]%
{$\!\sim$}gbergman\protect\linebreak[0]/papers/\,.
}
\subjclass[2000]{Primary: 06A06, 06A12, 06B10.}
%
\keywords{lattice of ideals of a lattice or semilattice; poset of ideals
(upward directed downsets) of a poset; ideals generated by chains}
\author{George M. Bergman}

\begin{abstract}
For $P$ a poset or lattice, let $\Id(P)$ denote the poset,
respectively, lattice, of upward directed downsets in $P,$ including
the empty set, and let $\id(P)=\Id(P)-\{\emptyset\}.$
This note obtains various results to the effect that $\Id(P)$ is always,
and $\id(P)$ often, ``essentially larger'' than $P.$
In the first vein, we find that a poset $P$ admits no
$\!<\!$-respecting map (and so in particular, no one-to-one
isotone map) from $\Id(P)$ into $P,$ and, going the other way, that
an upper semilattice $S$ admits no semilattice homomorphism from
any subsemilattice of itself onto $\Id(S).$

The slightly smaller object $\id(P)$ is known to be isomorphic to $P$
if and only if $P$ has ascending chain condition.
This result is strengthened to say that the only posets $P_0$ such that
for every natural number $n$ there exists a poset $P_n$ with
$\id^n(P_n)\cong P_0$ are those having ascending chain condition.
On the other hand, a wide class of cases is noted
where $\id(P)$ is embeddable in $P.$

Counterexamples are given to many variants of the statements proved.
\end{abstract}
\maketitle

\section{Definitions.}\label{S.def}

Recall that a poset $P$ is said to be {\em upward directed} if
every pair of elements of $P$ is majorized by some common element,
and that a {\em downset} in $P$ means a subset $d$ such
that $x\leq y\in d\implies x\in d.$
The downset generated by a subset $X\subseteq P$ will be
written $P\downarrow X=\{y\in P\mid (\exists\,x\in X)\ y\leq x\}.$
A {\em principal} downset means a set of
the form $P\downarrow\{x\}$ for some $x\in P.$

\begin{definition}\label{D.Id}
If $P$ is a poset, an {\em ideal} of $P$ will mean a
\textup{(}possibly empty\textup{)} upward directed downset in $P.$
The set of all ideals of $P,$ partially ordered by inclusion,
will be denoted $\Id(P),$ while we shall write
$\id(P)$ for the subset of {\em nonempty} ideals.

The subposet of $\Id(P),$ respectively $\id(P),$ consisting
of ideals generated by chains, respectively, nonempty chains, will be
denoted $\cId(P),$ respectively $\cid(P).$
\end{definition}

This use of the term ``ideal'' is common in lattice
theory, where an ideal of a lattice $L$ can be characterized
as a subset (often required to be nonempty) that is closed under
internal joins, and under meets with arbitrary elements of $L.$
For general posets, ``ideal'' is used in some works, such as
\cite{MEr}, with the same meaning as here;
in others, such as \cite{M+M+T}, ``(order) ideal'' simply means downset,
while in still others, such as \cite{OF} and \cite{MEr1},
an ``(order) ideal'' means a {\em Frink ideal}, which can
be characterized as an upward directed union of {\em intersections}
of principal downsets.
(We shall not consider Frink ideals here.
In upper semilattices, they are the same as our ideals.
For a general study of classes of downsets in posets, see \cite{MEr1}.)

If $S$ is an upper semilattice, its ideals are the closed sets
with respect to a closure operator, so $\Id(S)$ is a complete lattice.

If $L$ is a lattice (or a downward directed upper
semilattice), $\id(L)$ is a sublattice
of $\Id(L),$ though not a complete one unless
$L$ has a least element.
For $S$ any upper semilattice, $\id(S)$ at least
forms an upper subsemilattice of the lattice $\Id(S).$

In a poset $P,$ the principal downsets (which we can now also
call the principal ideals) form a poset isomorphic to $P.$
If $P$ has ascending chain condition, we see that every nonempty
ideal is principal, so $\id(P)\cong P.$
(This yields easy examples where
$\Id(P)$ is neither an upper nor a lower semilattice.)

The operators $\cId$ and $\cid$ are not as nicely behaved
as $\Id$ and $\id.$
Even for $L$ a lattice,
$\cId(L)$ need be neither an upper nor a lower semilattice.
For instance, regarding $\omega$ and $\omega_1$
(the first infinite and the first uncountable ordinals),
with their standard total orderings,
as lattices, let $L$ be the direct product lattice
$(\omega+1)\times(\omega_1+1).$
(Recall that each ordinal is taken to be the set of
all lower ordinals.
Thus, $\omega+1=\omega\cup\{\omega\}$
and $\omega_1+1=\omega_1\cup\{\omega_1\}.)$
Then the chains $x_0=\omega\times\{0\}$ and $x_1=\{0\}\times\omega_1$
belong to $\cId(L),$ but their join in $\Id(L),$ namely
$\omega\times\omega_1,$ has no cofinal subchain
(because $\omega$ and $\omega_1$ have different cofinalities),
hence does not lie in $\cId(L).$
Indeed, $x_0$ and $x_1$ have no least upper bound in $\cId(L),$
since their two common upper bounds
$y_0=L\downarrow(\omega\times\{\omega_1\})$ and
$y_1=L\downarrow(\{\omega\}\times\omega_1)$ intersect
in the non-chain-generated ideal $\omega\times\omega_1.$
One also sees from this that the latter two
elements, $y_0$ and $y_1,$ have no greatest lower bound in $\cId(L).$

Why are we considering these badly behaved operators?
Because the method of proof of our first result involves,
not merely an ideal, but an ideal generated by a chain, and it seemed
worthwhile to formulate the result so as to capture the
consequences of this fact.
An appendix, \S\ref{S.chains}, notes variants of these
constructions that are better behaved.

We will also use

\begin{definition}\label{D.str_iso}
A map $f:P\to Q$ of posets will be called {\em strictly isotone}
if $x<y$ in $P$ implies $f(x)<f(y)$ in $Q.$
\end{definition}

Thus, the strictly isotone maps include the embeddings of posets,
and so in particular, the embeddings of lattices and
of upper semilattices.

\section{Nonembeddability results.}\label{S.nonemb}

S.\,Todorcevic has pointed out to me that my first result,
Theorem~\ref{T.xembcIdP} below,
is a weakened version of an old result of {\Dj}.\,Kurepa \cite{DK},
which says that the poset of well-ordered chains in any poset
$P,$ ordered by the relation of one chain being an initial
segment of another, cannot be mapped into $P$ by a strictly isotone map.
(A still stronger version appears in Todorcevic \cite{ST}.)
One could say that the one contribution of Theorem~\ref{T.xembcIdP},
relative to Kurepa's result, is that by weakening the assertion about
chains to one about the downsets they generate,
it gives us a statement about ideals of $P.$

All the versions of this result have essentially the same proof;
it is short and neat, so we include it.
We give the result for posets; that statement implies, of course,
the corresponding statements for semilattices and lattices.

\begin{theorem}[cf.\ {\cite[Th\'eor\`eme~1]{DK}},
{\cite[Theorem~5]{ST}}]\label{T.xembcIdP}
Let $P$ be any poset.
Then there exists no strictly isotone map $f:\cId(P)\to P.$
Hence \textup{(}weakening this statement in two ways\textup{)}
there exists no embedding of posets $f:\Id(P)\to P.$
\end{theorem}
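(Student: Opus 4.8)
The plan is to prove the stronger first assertion --- that no strictly isotone $f\colon\cId(P)\to P$ exists --- by a transfinite recursion in the spirit of Kurepa, and then to read off the weaker second assertion.

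Assume $f\colon\cId(P)\to P$ is strictly isotone. By recursion on the ordinals, define elements $c_\alpha\in P$ by setting $c_\alpha=f(d_\alpha)$ with $d_\alpha=P\downarrow\{c_\beta\mid\beta<\alpha\}$ whenever $d_\alpha\in\cId(P)$, and $c_\alpha$ equal to some fixed element of $P$ otherwise (so the recursion is formally legitimate; note $P\ne\emptyset$, since $\emptyset=P\downarrow\emptyset\in\cId(P)$, so at least $c_0=f(\emptyset)$ makes sense). I will then prove, by a simultaneous transfinite induction on $\alpha$, that for every $\alpha$ the set $\{c_\beta\mid\beta<\alpha\}$ is a chain --- so that the ``whenever'' clause above is in force and $d_\alpha\in\cId(P)$ genuinely --- and moreover that $c_\beta<c_\alpha$ for all $\beta<\alpha$.

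The inductive step is the heart of the matter. Assume the two claims for all ordinals below $\alpha$. Then for $\beta_1<\beta_2<\alpha$ the hypothesis applied at $\beta_2$ gives $c_{\beta_1}<c_{\beta_2}$, so $\{c_\beta\mid\beta<\alpha\}$ is a chain and $d_\alpha\in\cId(P)$. Now fix $\gamma<\alpha$. Clearly $d_\gamma\subseteq d_\alpha$, since the generating set of $d_\gamma$ sits inside that of $d_\alpha$. The inclusion is proper: $c_\gamma$ lies in $d_\alpha$ (it is one of its generators, as $\gamma<\alpha$) but not in $d_\gamma=P\downarrow\{c_\delta\mid\delta<\gamma\}$, for $c_\gamma\le c_\delta$ for some $\delta<\gamma$ would contradict $c_\delta<c_\gamma$, which holds by the inductive hypothesis applied at $\gamma$. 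Hence $d_\gamma\subsetneq d_\alpha$, and strict isotonicity gives $c_\gamma=f(d_\gamma)<f(d_\alpha)=c_\alpha$. This establishes both claims at $\alpha$.

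Consequently $\alpha\mapsto c_\alpha$ is strictly increasing, hence injective, from the class of ordinals into the set $P$ --- which is impossible (for instance, carrying the recursion only as far as an ordinal $\lambda$ that admits no injection into $P$ already produces an injection $\lambda\to P$). This proves the first statement. For the second: any embedding of posets $g\colon\Id(P)\to P$ is injective and isotone, hence strictly isotone, and restricting it to the subposet $\cId(P)\subseteq\Id(P)$ produces a strictly isotone map $\cId(P)\to P$, contradicting the first statement. I do not expect a real obstacle here; the one point requiring care is the interlocking of the recursion with the induction --- in particular the need to know each $d_\alpha$ is chain-generated (so that $f$ is applied only where it is defined) and that $c_\gamma\notin d_\gamma$ --- both of which ride on the running assumption that the $c_\alpha$ strictly increase.
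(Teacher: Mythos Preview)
Your proof is correct and follows essentially the same approach as the paper's: both construct, by transfinite recursion, elements $c_\alpha=f(P\downarrow\{c_\beta\mid\beta<\alpha\})$ and argue inductively that this sequence is strictly increasing, producing too long a chain in $P$. You are a bit more explicit than the paper about the formal legitimacy of the recursion and about why $c_\gamma\notin d_\gamma$, and you spell out the deduction of the second assertion from the first, but the underlying argument is the same.
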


\begin{proof}
Suppose $f:\cId(P)\to P$ is strictly isotone.
Let us construct a chain of elements $x_\alpha\in P,$ where
$\alpha$ ranges over all ordinals of cardinality $\leq\r{card}(P)$
(i.e., over the successor cardinal to $\r{card}(P)),$
by the single recursive rule
\begin{xlist}\item\label{x.x_*a}
$x_\alpha\ =\ f(P\downarrow\{x_\beta\mid\beta<\alpha\}).$
\end{xlist}
Given $\alpha,$ and assuming recursively that
$\beta\mapsto x_\beta$ $(\beta<\alpha)$ is a strictly
isotone map $\alpha\to P,$ we see that
$P\downarrow\{x_\beta\mid\beta<\alpha\}$ is a member of $\cId(P),$
so~(\ref{x.x_*a}) makes sense.
We also see that for all $\beta<\alpha,$ the chain in $P$ occurring
in the definition of $x_\beta$ is a proper initial segment of the
chain in the definition of $x_\alpha;$ so the strict isotonicity
of $f$ insures that $x_\beta<x_\alpha,$
and our recursive assumption carries over to $\alpha+1.$
It is also clear that if that assumption holds for all $\beta$ less
than a limit ordinal $\alpha,$ it holds for $\alpha$ as well.

This construction thus yields a chain of cardinality
$>\r{card}(P)$ in $P,$ a contradiction, completing the proof.
\end{proof}

In the above proof, we restricted~(\ref{x.x_*a}) to ordinals
$\alpha$ of cardinality $\leq\r{card}(P)$ only so as to have a
genuine set over which to do recursion.
The reader comfortable with recursion on the proper class of
all ordinals can drop that restriction, ending the proof
with an all the more egregious contradiction.

Theorem~\ref{T.xembcIdP} is reminiscent of Cantor's result that the
power set of a set $X$ always has larger cardinality than $X.$
(Cf.\ the title of \cite{G+D}, where a similar result is proved with
the poset of all downsets in place of the smaller poset of ideals.)
In some cases, for instance when $P$ is the chain of rational numbers,
$\Id(P)$ in fact has larger cardinality than $P;$ but in others, for
instance when $P$ is the chain of integers, or of reals, it does not.
For the latter case, one can verify by induction that for every
natural number $n,$ the result of iterating this construction $n$
times, $\Id^n(\mathbb{R}),$ may be described
as the chain gotten by taking $\mathbb{R}\times(n+1),$ lexicographically
ordered, and attaching an extra copy of the chain $n$ to each end.
So the above theorem yields the curious fact
that the chain so obtained using a larger value of
$n$ can never be embedded in the chain obtained using a smaller value.
(The copies of $n$ at the top and bottom are irrelevant to this
fact, since by embedding $\mathbb{R}$ in, say, the interval
$(0,1),$ one can get an identification of $\Id^n(\mathbb{R})$ with a
``small'' piece of itself, hence in
particular, an embedding into itself minus those add-ons.)

Since the proof of Theorem~\ref{T.xembcIdP} is based on constructing
chains, one may ask whether $\cId(P)$ always contains a chain
that cannot be embedded in $P.$
That is not so; to see this, let us form a disjoint union of chains of
finite lengths $1,\,2,\,3,\ \dots,$ with no order-relations between
elements of different chains, and -- to make our example not only a
poset but a lattice -- throw in a top element and a bottom element.
The resulting lattice $L$ has ascending chain condition, hence
$\Id(L),$ and so also $\cId(L),$ consists of the principal ideals and
the empty ideal; in other words $\Id(L)=\cId(L)$ is, up to isomorphism,
the lattice obtained by attaching one new element to the bottom of $L.$
Hence, like $L,$ it has chains of all natural number lengths
and no more, though as
Theorem~\ref{T.xembcIdP} shows (and a little experimenting confirms),
it cannot be mapped into $L$ by any strictly isotone map.

In contrast to what Theorem~\ref{T.xembcIdP} says about $\Id(P),$
we noted in \S\ref{S.def} that $\id(P)$ is canonically
isomorphic to $P$ whenever the latter has ascending chain condition.
D.\,Higgs~\cite{DH}, answering a question of G.\,Gr\"atzer, showed
for lattices $L$ that it is {\em only} in this case that $\id(L)$
can be isomorphic in {\em any} way to $L,$ and
M.\,Ern\'e~\cite{MEr} (inter alia) generalized this
statement to arbitrary posets.
But our next result, extending the trick
of the preceding paragraph, shows that the class of lattices $L$
such that $\id(L)$ can be {\em embedded} as a lattice in $L$
(and hence the class of posets $P$ such that $\id(P)$
can be embedded as a poset in $P)$ is much larger.

\begin{proposition}\label{P.id_emb}
Every lattice $L$ is embeddable as a lattice in a lattice $L'$ such
that $\id(L')$ is embeddable as a lattice in $L'.$

Hence the same is true with ``lattice'' everywhere replaced
by ``upper semilattice'' or ``poset''.
\end{proposition}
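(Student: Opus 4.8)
The plan is to build $L'$ as an infinite ``descending tower'' whose floors are the successive ideal lattices of $L,$ so arranged that $\id(L')$ is that same tower with its top floor removed. Put $A_0=L$ and $A_{n+1}=\id(A_n)$ for $n\geq 0$; each $A_n$ is a lattice, by the fact recalled in \S\ref{S.def} that the ideal lattice of a lattice is again one. Let $L'$ be the lattice obtained by stacking these with $A_0$ on top: $L'=\bigsqcup_{n\geq 0}A_n$ as a set, each $A_n$ keeping its order, and every element of $A_m$ placed below every element of $A_n$ whenever $m>n.$ A stack (ordinal sum) of lattices along a chain is a lattice, and joins and meets of two elements on a common floor are computed inside that floor; so $L'$ is a lattice and its top floor $A_0=L$ is a sublattice of it, giving the embedding $L\hookrightarrow L'.$ (If it is convenient to have every $A_n$ be a complete lattice, first replace $L$ by $L$ with a bottom and top adjoined; nothing essential changes.)

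Next I would compute $\id(L').$ Let $I$ be a nonempty directed downset of $L',$ and let $A_k$ be the topmost floor that $I$ meets, i.e.\ the one of smallest index. Every floor below $A_k$ lies beneath any element $I$ has in $A_k,$ so $I$ contains all of $A_{k+1},A_{k+2},\dots$; and the trace $d:=I\cap A_k$ is an ideal of $A_k,$ since an upper bound in $I$ of two members of $A_k$ cannot lie on a lower floor (it would be below them) nor on a higher floor ($I$ meets none), hence lies in $A_k.$ Conversely $(\bigcup_{j>k}A_j)\cup d$ is a nonempty directed downset of $L'$ for each $k\geq 0$ and each $d\in\id(A_k).$ One checks that $(k,d)\mapsto(\bigcup_{j>k}A_j)\cup d$ is an order-isomorphism from $\bigsqcup_{k\geq 0}\id(A_k)$ (with $\id(A_0)$ on top) onto $\id(L').$ Since $\id(A_k)=A_{k+1},$ reindexing by $n=k+1$ turns this into $\id(L')\cong\bigsqcup_{n\geq 1}A_n$: the tower $L'$ with its top floor $A_0$ deleted.

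That sub-poset $\bigsqcup_{n\geq 1}A_n$ of $L'=\bigsqcup_{n\geq 0}A_n$ is again a sublattice -- for the same reason as before, joins and meets of elements on a common floor stay on that floor, while across floors they are just the higher or the lower of the two elements. Hence $\id(L')$ embeds as a lattice in $L',$ which proves the first assertion. The remaining assertions follow formally: every upper semilattice embeds, as an upper semilattice, in a lattice (for instance in its own ideal lattice, via principal ideals), and every poset embeds, as a poset, in a lattice (for instance in its lattice of downsets), so one obtains the statements for upper semilattices and for posets by applying the lattice case to such an overlattice and composing embeddings.

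The step demanding the most care is the computation of $\id(L')$: one must verify that the family $\{(\bigcup_{j>k}A_j)\cup d\}$ really lists all ideals of the stack -- that no ideal straddling several floors in some unforeseen way has been missed -- and that the parametrization by $(k,d)$ is order-preserving in both directions. Once the right tower has been written down, the remaining verifications are routine.
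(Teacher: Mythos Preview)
Your argument is correct, and the idea is the same as the paper's --- assemble the tower $L,\ \id(L),\ \id^2(L),\dots$ into a single lattice $L'$ in such a way that $\id(L')$ is that tower with the bottom copy $L$ removed --- but your assembly is genuinely different.  The paper places the pieces $\id^n(L)$ side by side as an antichain of sublattices and adjoins a global $0$ and $1$ to force a lattice structure; you instead form the ordinal sum, stacking the pieces vertically with $L$ on top.  Both give $\id(L')\cong L'\setminus L$ as a sublattice, by essentially the same shift-of-index bookkeeping.  Your version has the minor advantage that no extra $0$ and $1$ are needed (the ordinal sum of lattices is already a lattice, and each floor is already a sublattice because joins and meets of same-floor elements stay on that floor); the paper's version has the minor advantage that the ideals of a disjoint union are perhaps more transparently ``local,'' requiring no argument about topmost floors.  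Your reduction of the semilattice and poset cases to the lattice case is identical to the paper's.
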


\begin{proof}
Without loss of generality, assume $L$ nonempty.
Let $L'$ be the poset obtained by taking the disjoint union of the
lattices $L,\ \id(L),\ \dots,\ \id^n(L),\ \dots$ $(n\in\omega),$
with no order-relations among elements of distinct pieces,
and then throwing in a top element $1$ and a bottom element $0.$
It is easy to see that $L'$ is a lattice, and that every nonempty
ideal of $L',$ other than $L'$ and $\{0\},$ contains elements of
the sublattice $\id^n(L)$ for one and only one value of $n.$
For each $n,$ the ideals of this sort containing elements
of $\id^n(L)$ form a sublattice of $\id(L')$ isomorphic to
$\id(\id^n(L))=\id^{n+1}(L).$
One sees from this that $\id(L')$ is isomorphic to the sublattice
of $L'$ obtained by deleting the original copy of $L.$

This proves the assertion about lattices.
The corresponding statements for upper semilattices and for posets
follow, since every semilattice or poset can be embedded
as a subsemilattice or subposet in a lattice; e.g., in its lattice of
ideals in the former case, in its lattice of downsets in the latter.
(In fact, there exist embeddings preserving all least upper
bounds and greatest lower bounds that exist in the given structures:
\cite{HM}, \cite[Theorem V.21]{GBk}.)
\end{proof}

On the other hand, there are many posets $P$ for which we can deduce
from Theorem~\ref{T.xembcIdP} the non\-embeddability of $\id(P)$ in $P.$

\begin{corollary}[to Theorem~\ref{T.xembcIdP}]\label{C.xembcidP}
Suppose $P$ is a poset which admits a strictly isotone map $g$ into a
nonmaximal principal up-set within
itself, i.e., into $P\uparrow x$ for some nonminimal $x\in P.$
Then there exists no strictly isotone map $f:\id(P)\to P.$
In particular for $P$ the lattice of all subsets of an infinite set, or
of all equivalence relations on an infinite set, there is no such $f.$
\end{corollary}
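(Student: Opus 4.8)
The plan is to reduce the general statement to Theorem~\ref{T.xembcIdP} by using the map $g$ to supply a value, lying strictly below all the others, for the bottom element $\emptyset$ of $\cId(P).$ Suppose, for contradiction, that $f\colon\id(P)\to P$ is strictly isotone. Since $x$ is nonminimal, fix $w\in P$ with $w<x.$ Recalling that $\cid(P)$ is a subposet of $\id(P)$ and that $\cId(P)=\cid(P)\cup\{\emptyset\},$ with $\emptyset$ (the ideal generated by the empty chain) as least element, I would define $h\colon\cId(P)\to P$ by $h(\emptyset)=w$ and $h(d)=g(f(d))$ for $d\in\cid(P).$

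Next I would verify that $h$ is strictly isotone, whereupon Theorem~\ref{T.xembcIdP} yields the desired contradiction. On $\cid(P)\subseteq\id(P)$ the map $h$ is the composite of $f$ and $g,$ two strictly isotone maps, and so is strictly isotone there; the only remaining strict inequalities in $\cId(P)$ are those of the form $\emptyset\subsetneq d$ with $d\in\cid(P),$ for which $h(\emptyset)=w<x\leq g(f(d))=h(d),$ the middle inequality because $g(f(d))\in P\uparrow x.$ This proves the general assertion. For the two named cases it then remains to exhibit, when $P$ is the lattice $2^X$ of all subsets of an infinite set $X$ (respectively the lattice $\mathrm{Eq}(X)$ of all equivalence relations on $X$), a strictly isotone self-map of $P$ whose image lies in some $P\uparrow x$ with $x$ nonminimal; since embeddings are strictly isotone, an embedding of this sort suffices. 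For $2^X$ I would fix $p\in X,$ use a bijection $X\to X\setminus\{p\}$ (which exists as $X$ is infinite) to get a lattice isomorphism $2^X\cong 2^{X\setminus\{p\}},$ and compose it with the embedding $T\mapsto T\cup\{p\}$ of $2^{X\setminus\{p\}}$ into $2^X,$ whose image is $2^X\uparrow\{p\},$ with $\{p\}$ nonminimal. For $\mathrm{Eq}(X)$ I would fix distinct $p,q\in X,$ take $\theta$ to be the equivalence relation on $X$ whose only non-singleton class is $\{p,q\},$ use $|X/\theta|=|X|$ to get a lattice isomorphism $\mathrm{Eq}(X)\cong\mathrm{Eq}(X/\theta),$ and compose it with the embedding of $\mathrm{Eq}(X/\theta)$ into $\mathrm{Eq}(X)$ given by pulling equivalence relations back along the projection $X\to X/\theta,$ whose image is $\mathrm{Eq}(X)\uparrow\theta,$ with $\theta$ nonminimal (it properly contains the equality relation).

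I do not expect a real obstacle here: the argument is short and the verifications routine. The one point that repays a moment's thought is why post-composing with $g$ is the right move --- it forces the entire image $f(\id(P))$ up above $x,$ which is precisely what clears the ``room below'' into which the value $h(\emptyset)=w$ can be placed; absent some such hypothesis on $P$ no such room need exist, since by Proposition~\ref{P.id_emb} there are many lattices $L$ for which $\id(L)$ does embed in $L.$ Verifying the two examples then comes down to the familiar fact that deleting, or identifying, finitely many points of an infinite set does not change its cardinality.
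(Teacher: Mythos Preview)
Your proof is correct and follows essentially the same route as the paper's: compose $f$ with $g$ to push the image above $x,$ pick a point below $x$ as the value at $\emptyset,$ and invoke Theorem~\ref{T.xembcIdP}. The only cosmetic difference is that you restrict to $\cid(P)$ and extend to $\cId(P),$ whereas the paper keeps all of $\id(P)$ and extends to $\Id(P);$ the examples are handled identically.
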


\begin{proof}
By assumption we have a strictly isotone map $g:P\to P\uparrow x,$
where $x$ is not minimal.
Take $y<x$ in $P.$
If there existed a strictly isotone map $f:\id(P)\to P,$ then $gf$
would be another such map, with image consisting of elements $>y.$
Hence we could extend it to $\Id(P)$ by sending $\emptyset$
to $y,$ contradicting Theorem~\ref{T.xembcIdP}.
This proves our main assertion.

If $X$ is an infinite set, take distinct elements $x_0, x_1\in X.$
Then the lattice of all subsets of $X$ is isomorphic to its sublattice
consisting of subsets that contain $x_0,$ and the lattice of
equivalence relations is isomorphic to its sublattice
of equivalence relations that identify $x_0$ with $x_1.$
Thus, both lattices satisfy the hypothesis
of our main assertion, giving the final statement.
\end{proof}

F.\,Wehrung \cite{FW} shows that the lattice $L$ of equivalence
relations on a set of infinite cardinality $\kappa$ contains a
coproduct of two copies of itself (and hence, by results of
\cite{embed}, a coproduct of $2^\kappa$ copies of itself).
His proof uses the description of $L,$ up to isomorphism,
as $\id(L_\r{fin}),$ where $L_\r{fin}\subseteq L$ is the sublattice
of finitely generated equivalence relations.
This led me to wonder whether $L$ might also contain a copy
of $\id(L),$ and so initiated the present investigation.
The above corollary answers that question in the negative.

\section{Nonexistence of surjections.}\label{S.nosurj}

Another version of the idea that a lattice $L$ is essentially
smaller than its ideal lattice would be to say that there are no
surjective homomorphisms $L\to \Id(L).$
The next theorem shows that this is true.
We again get the result for a wider class of objects than
lattices, in this case upper semilattices.
We shall see that the result does not extend
to general posets or isotone maps, nor can we replace
ideals by chain-generated ideals; in these ways it is of a weaker
sort than Theorem~\ref{T.xembcIdP}.
On the other hand, it is stronger in a different~way.

\begin{theorem}\label{T.xsurjId}
Let $S$ be an upper semilattice.
Then there exists no upper semilattice homomorphism from
any {\em subsemilattice} $S_0\subseteq S$ onto $\Id(S).$
\end{theorem}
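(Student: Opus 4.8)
The plan is to argue by contradiction, mimicking the "runaway chain" idea of Theorem~\ref{T.xembcIdP} but adapted to a surjection instead of an injection. So suppose $h:S_0\to\Id(S)$ is a surjective upper semilattice homomorphism, where $S_0$ is a subsemilattice of $S$. The key structural fact I want to exploit is that $\Id(S)$ has a largest element, namely $S$ itself, and that this element is the join (in the complete lattice $\Id(S)$) of all the principal ideals $S\downarrow\{s\}$. Since $h$ is onto, for each $s\in S$ there is some $a_s\in S_0$ with $h(a_s)=S\downarrow\{s\}$; in particular there is some $b\in S_0$ with $h(b)=S$ (the whole of $S$, which is an ideal precisely when $S$ is upward directed, which it is, being an upper semilattice). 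The idea is to show that no single element $b$ of $S$ can have its image under a semilattice homomorphism equal to all of $\Id(S)$, by producing an ideal of $S$ strictly larger than $h(\text{anything}\le b)$ in a way that forces a contradiction.

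Concretely, I would first observe that a semilattice homomorphism is isotone, so $h(S\downarrow X)$ for finite $X\subseteq S_0$ is controlled by $\sup$'s: $h(a\vee a')=h(a)\vee h(a')=h(a)\cup\text{(ideal generated by }h(a)\cup h(a'))$. The element $b$ with $h(b)=S$ then satisfies: for every $s\in S$, the preimage element $a_s$ has $h(a_s)\subseteq S=h(b)$, but isotonicity only gives information when $a_s\le b$ — which need not hold. So the real content is that $S$, as a downward-directed-need-not structure, still forces a "small" $S_0$ to map onto something "large". I would then run the transfinite construction: build an ascending chain $s_0<s_1<\cdots<s_\alpha<\cdots$ in $S$ and a descending (or suitably placed) chain of elements $b_\alpha\in S_0$ with $h(b_\alpha)\supseteq S\downarrow\{s_\gamma:\gamma<\alpha\}$, using surjectivity at each stage to pick $b_\alpha$ and using the semilattice structure (taking $b_\alpha\wedge$-type combinations, or rather, noting $S_0$ need not be closed under meets, so instead taking joins in $\Id(S)$ and pulling back) to keep the construction coherent. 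The strictly-ascending chain in $S$ is produced because at stage $\alpha$ the ideal $S\downarrow\{s_\gamma:\gamma<\alpha\}$ is a proper subset of $S$ (as long as the $s_\gamma$ don't already generate everything), so its preimage under $h$ is not all of $S_0$-image's worth, letting us find a new element $s_\alpha$ not in that downset.

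The main obstacle I expect is the asymmetry between the two structures: $h$ is a join-homomorphism but $S_0$ need not be closed under meets, so I cannot freely "intersect" preimages the way one intersects ideals in $\Id(S)$. The trick will be to work entirely on the $\Id(S)$ side — where we do have all meets, since $\Id(S)$ is a complete lattice — and to use surjectivity to transport only the information we need back to $S_0$ at each step, choosing at stage $\alpha$ a single element $b_\alpha\in S_0$ mapping to the ideal generated by the previously chosen ideals, then locating $s_\alpha\in S\setminus(\text{that ideal})$ and an element $a_{s_\alpha}\in S_0$ with $h(a_{s_\alpha})=S\downarrow\{s_\alpha\}$, and finally taking $b_{\alpha+1}=b_\alpha\vee a_{s_\alpha}\in S_0$ (a join, which $S_0$ does contain). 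Then $h(b_{\alpha+1})\supsetneq h(b_\alpha)$, so the $b_\alpha$ form a strictly ascending chain in $S_0\subseteq S$; pushing this through all ordinals of cardinality $\le\r{card}(S)$ gives a chain in $S$ longer than $\r{card}(S)$, the desired contradiction. The one remaining subtlety — ensuring the ideals $h(b_\alpha)$ are eventually not all of $S$, so the construction doesn't stall — is handled by noting that if some $h(b_\alpha)=S$ already, then we may restart the whole argument replacing "$=S$" targets by "$=$ some fixed ideal that is a proper, directed, cofinal-in-itself subset" chosen so that it too requires infinitely many generators; the cleanest way is to choose the target ideals to be principal and to track which principal ideals have been "hit", arranging the $s_\alpha$ to exhaust an unbounded chain. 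I would present the construction with the target fixed as $S$ and address the stalling issue in a short remark, as in the proof of Theorem~\ref{T.xembcIdP} where the recursion is cut off at a cardinal for set-theoretic tidiness.
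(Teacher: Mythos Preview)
Your recursion has a genuine gap at limit stages. At a limit ordinal $\lambda$ you choose, by surjectivity, some $b_\lambda\in S_0$ with $h(b_\lambda)=\bigcup_{\gamma<\lambda}h(b_\gamma),$ but nothing forces $b_\lambda\geq b_\gamma$ for the earlier $\gamma$; since $S_0$ has only \emph{finitary} joins, you cannot simply take $\bigvee_{\gamma<\lambda}b_\gamma.$ So the $b_\alpha$ do not form a chain in $S_0,$ contrary to your claim. One might hope to salvage the argument by asking only that the $b_\alpha$ be \emph{distinct}, which follows from the $h(b_\alpha)$ being strictly increasing in $\Id(S).$ But then the real obstruction surfaces: you are building a strictly increasing chain $(h(b_\alpha))$ in $\Id(S),$ and you need it to have length exceeding $\r{card}(S).$ Yet the very hypothesis you are contradicting --- a surjection $S_0\to\Id(S)$ --- forces $\r{card}(\Id(S))\leq\r{card}(S),$ so no such chain exists in $\Id(S),$ and your recursion \emph{must} stall (by hitting $h(b_\alpha)=S$) before reaching $\r{card}(S)^+$ steps. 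Your proposed fixes for the stalling (``restart with a proper cofinal ideal,'' ``arrange the $s_\alpha$ along an unbounded chain'') do not help: $S$ may well have a greatest element, in which case $S$ is itself principal and there are no unbounded chains to exploit.

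The paper's proof sidesteps all of this by moving up one level: rather than trying to build a long chain in $S,$ it applies Theorem~\ref{T.xembcIdP} to $P=\Id(S).$ The key observation --- which your write-up dances around but never uses --- is that because $h$ is a \emph{semilattice} homomorphism, the preimage $h^{-1}(I)$ of any ideal $I\in\Id(\Id(S))$ is an ideal of $S_0,$ and hence $S\downarrow h^{-1}(I)$ is an ideal of $S.$ Surjectivity of $h$ makes $I\mapsto S\downarrow h^{-1}(I)$ an order-embedding $\Id(\Id(S))\hookrightarrow\Id(S),$ which Theorem~\ref{T.xembcIdP} forbids. This is a two-line argument once you see it; the transfinite recursion is already packaged inside Theorem~\ref{T.xembcIdP} and need not be re-run by hand.
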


\begin{proof}
Suppose $f: S_0\to\Id(S)$ were such a surjective homomorphism.
Then we could map $\Id(\Id(S))$ to $\Id(S)$ by taking
each $I\in \Id(\Id(S))$ to $S\downarrow f^{-1}(I).$
Because $f$ is onto, distinct ideals $I$ of $\Id(\Id(S))$ yield
distinct ideals $f^{-1}(I)$ of $S_0,$ and these will
generate distinct ideals of $S.$
This leads to an embedding $\Id(\Id(S))\to\Id(S)$ as posets,
contradicting Theorem~\ref{T.xembcIdP}.
\end{proof}

We cannot replace the semilattice $S$ and semilattice homomorphism
$f$ in Theorem~\ref{T.xsurjId}
by a poset and an isotone map, because the
inverse image of an ideal under an isotone map $f$ need not be an ideal.
Indeed, we can get a counterexample to the resulting
statement in which the given poset is a lattice
$L,$ and $f$ is a strictly isotone bijection $L\to\Id(L)\!:$
Let $L$ consist of a greatest element $1,$ a least element $0,$ and
countably many mutually incomparable elements $a_n$ $(n\in\omega)$
between them, and let $f$ act by
\begin{xlist}\item[]
$f(1)=L,\quad f(a_{n+1})=\{a_n,0\}\ \ (n\in\omega),\quad
f(a_0)=\{0\},\quad f(0)=\emptyset.$
\end{xlist}
(If we try to apply the construction in the proof of
Theorem~\ref{T.xembcIdP} to the map
$S\downarrow f^{-1}(-)$ from $\Id(\Id(S))$ to subsets of $S,$
the values we get for $x_0,\ x_1,\ x_2,\ x_3$ are respectively
$\emptyset,\ \{0\}, \ \{a_0,0\},$ and $\{a_1,a_0,0\},$ of which the
last is not an ideal, so the construction cannot be continued further.)

We could, of course, get a version of Theorem~\ref{T.xsurjId}
for posets by restricting our morphisms to isotone maps
under which inverse images of ideals are ideals.

Alternatively, we can escape these difficulties if we are
willing to replace ideals by downsets, getting the first sentence
of the next result.
But in fact, we can deduce using Theorem~\ref{T.xsurjId}
a stronger statement, the second sentence.

\begin{corollary}\label{C.xsurjdown}
No isotone map from a subset $P_0$ of a poset $P$ to the
lattice $\r{Down}(P)$ of all downsets of $P$ is surjective.

In fact, no isotone map $f$ from a poset $P_0$ to any upper
semilattice $T$ containing $\r{Down}(P_0)$ as a subsemilattice
has the property that $f(P_0)$ generates $T$ as an upper semilattice.
\end{corollary}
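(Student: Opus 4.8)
The plan is to derive the second, stronger assertion from Theorem~\ref{T.xsurjId}, and then read off the first as an easy corollary. The central idea is to exhibit $\r{Down}(P_0)$ as the ideal lattice $\Id(S)$ of an upper semilattice $S$ small enough for Theorem~\ref{T.xsurjId} to apply to it: the free upper semilattice on the poset $P_0$. Concretely, let $S=\r{Fin}(P_0)$ be the set of finitely generated nonempty downsets of $P_0$ (equivalently, the finite unions of principal downsets), ordered by inclusion; this is an upper semilattice, with join given by union. The first step is to check the isomorphism $\Id(\r{Fin}(P_0))\cong\r{Down}(P_0)$, under which an ideal $\mathcal I$ of $\r{Fin}(P_0)$ goes to $\bigcup\mathcal I$, with inverse $e\mapsto\{d\in\r{Fin}(P_0)\mid d\subseteq e\}$. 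Both maps are visibly isotone, and that they are mutually inverse comes down to the fact that an ideal of $\r{Fin}(P_0)$, being an upward directed downset, is automatically closed under the finite unions occurring inside $\r{Fin}(P_0)$.

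Next I would suppose, toward a contradiction, that an isotone $f:P_0\to T$ with the stated properties exists (the case $P_0=\emptyset$ being trivial). By isotonicity of $f$ and the universal property of the free semilattice, the rule $P_0\downarrow F\mapsto\bigvee_{x\in F}f(x)$ is well defined and extends $f$ to an upper semilattice homomorphism $h:\r{Fin}(P_0)\to T$; its image is the subsemilattice of $T$ generated by $f(P_0)$, which by hypothesis is all of $T$, so $h$ is surjective. Now put $S_0=h^{-1}(\r{Down}(P_0))$. Since $\r{Down}(P_0)$ is a subsemilattice of $T$, the set $S_0$ is a subsemilattice of $\r{Fin}(P_0)$, and $h$ maps $S_0$ onto $\r{Down}(P_0)$ (because $h$ is onto $T$ and $\r{Down}(P_0)\subseteq T$). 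Composing with the isomorphism of the first step yields an upper semilattice homomorphism from a subsemilattice of $S=\r{Fin}(P_0)$ onto $\Id(S)$, contradicting Theorem~\ref{T.xsurjId}.

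For the first assertion, given a poset $P\supseteq P_0$ I would observe that $d\mapsto P\downarrow d$ identifies $\r{Down}(P_0)$ with a subsemilattice of $\r{Down}(P)$ — it is injective since $(P\downarrow d)\cap P_0=d$, and it sends unions to unions — so a hypothetical isotone surjection $P_0\to\r{Down}(P)$ would be an instance of the second assertion with $T=\r{Down}(P)$ (its image is all of $T$, hence generates $T$), which we have just ruled out.

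The one step that takes real thought is the choice of $S$ together with the identification $\Id(\r{Fin}(P_0))\cong\r{Down}(P_0)$; the most tempting shortcuts — taking $S=\r{Down}(P_0)$, or mapping $\Id(\r{Down}(P_0))$ into $\r{Down}(P_0)$ directly by $I\mapsto\bigcup I$ — are precisely what Theorem~\ref{T.xsurjId} forbids, which forces the descent to the compact elements of $\r{Down}(P_0)$, i.e.\ to $\r{Fin}(P_0)$. After that, everything is bookkeeping with inverse images and subsemilattices.
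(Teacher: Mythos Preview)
Your proof is correct and follows essentially the same route as the paper's: your $\r{Fin}(P_0)$ is exactly the paper's $\r{fdown}(P_0)$, your isomorphism $\Id(\r{Fin}(P_0))\cong\r{Down}(P_0)$ is the paper's displayed relation~(\ref{x.Idfd}), and your extension of $f$ to $h$ via the free-semilattice universal property is the paper's use of~(\ref{x.fd}), after which both arguments take the inverse image of $\r{Down}(P_0)$ and invoke Theorem~\ref{T.xsurjId}. You supply a bit more detail than the paper (the explicit inverse bijections for the isomorphism, and the embedding $\r{Down}(P_0)\hookrightarrow\r{Down}(P)$ reducing the first assertion to the second), but the argument is the same.
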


\noindent
{\it Sketch of proof.}
Clearly the first assertion is a case of the second.
To prove the latter, let us,
for any poset $P,$ write $\r{fdown}(P)$ for the upper semilattice
of finite nonempty unions of principal downsets of $P.$
Then one can verify that
\begin{xlist}\item\label{x.fd}
$\r{fdown}(P)\ \cong$ the upper semilattice freely generated by the
poset $P.$
\end{xlist}
\begin{xlist}\item\label{x.Idfd}
$\r{Down}(P)\ \cong\ \Id(\r{fdown}(P)).$
\end{xlist}

Now given a poset $P_0$ and
an upper semilattice $T$ containing $\r{Down}(P_0),$ we see
from~(\ref{x.fd}) with $P_0$ for $P$ that isotone maps
$f:P_0\to T$ such
that $f(P_0)$ generates the semilattice $T$ are equivalent to surjective
semilattice homomorphisms $f':\r{fdown}(P_0)\to T.$
Hence, given such a map $f,$ if $T$ contains $\r{Down}(P_0)$ as a
subsemilattice, then the
inverse image under $f'$ of that subsemilattice
will be a subsemilattice of $\r{fdown}(P_0)$ which $f'$ maps
surjectively to $\r{Down}(P_0)\cong\Id(\r{fdown}(P_0)).$
But this is impossible, by Theorem~\ref{T.xsurjId}
with $S=\r{fdown}(P_0).$\qed\vspace{.5em}

We mentioned that one cannot replace $\Id(S)$ by $\cId(S)$
in Theorem~\ref{T.xsurjId}.
Indeed, even if we bypass the problem that $\cId(S)$ is not in general
an upper semilattice by restricting ourselves to cases where it is, the
proof of that theorem fails because $f^{-1}$ of an ideal
generated by a chain need not be generated by a chain.
Here is a counterexample to that generalization of the theorem.

\begin{lemma}\label{L.*k^ha0}
Let $\kappa$ be an infinite cardinal, and $S$ the lattice of all
finite subsets of $\kappa.$
Then $\cId(S)$ forms a lattice, and if $\kappa=\lambda^{\aleph_0}$
for some cardinal $\lambda,$ then $\cId(S)$ is a homomorphic image
of $S$ as an upper semilattice.
\end{lemma}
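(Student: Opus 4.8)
The plan is to analyze $\cId(S)$ concretely when $S$ is the lattice of finite subsets of $\kappa$, and then exhibit the required semilattice surjection. First I would describe the chain-generated ideals of $S$: a chain in $S$ is an increasing (under inclusion) family of finite subsets of $\kappa$, so the ideal it generates is either a finite subset of $\kappa$ (if the chain is eventually constant, equivalently has a maximum) or a \emph{countably infinite} subset of $\kappa$ expressible as an increasing union of finite sets. Thus $\cId(S)$ consists exactly of the finite and the countably infinite subsets of $\kappa$, ordered by inclusion. From this description I would check that $\cId(S)$ is a lattice: the meet of two such subsets is their intersection (finite or countable, hence in $\cId(S)$), and the join is their union, which is finite if both are finite and otherwise countably infinite — in all cases a member of $\cId(S)$. (Note this already illustrates the paper's earlier point that $\cId$ of a lattice is not generally a sublattice of $\Id$: joins here are \emph{not} the joins in $\Id(S)$, since $\Id(S)$ contains uncountable subsets of $\kappa$ as well.)

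Next, assuming $\kappa=\lambda^{\aleph_0}$, I would build the semilattice homomorphism $S\to\cId(S)$. The idea is that $\kappa=\lambda^{\aleph_0}$ means we may identify $\kappa$ with the set $\lambda^\omega$ of all functions $\omega\to\lambda$. A finite subset $F\subseteq\lambda^\omega$ should be sent to the set of all functions in $\lambda^\omega$ that agree with \emph{some} member of $F$ on some initial segment — more precisely, I would let $h(F)$ be the union, over $g\in F$ and $n\in\omega$, of the ``cylinder'' $\{g'\in\lambda^\omega \mid g'\!\restriction n = g\!\restriction n\}$ is too large; instead the right target is a \emph{countable} set, so I would take $h(F)=\{\,g\!\restriction n \text{-extended-by-zeros} : g\in F,\ n\in\omega\,\}$, i.e. for each $g\in F$ and each $n$, the function that equals $g$ on $\{0,\dots,n-1\}$ and is $0$ afterwards. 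For a single $g$ this is a countable increasing-under-nothing set, but its closure under the chain $n\mapsto(g\!\restriction n\text{, then }0)$ — actually these functions form a chain only if we're careful — let me instead simply take $h(F)$ to be the downset-in-$\cId(S)$ generated by these countably many points together; since that set is a countable subset of $\kappa$, it lies in $\cId(S)$. One checks $h(F\cup F')=h(F)\cup h(F')=h(F)\vee h(F')$, so $h$ is a join-homomorphism. Finally, surjectivity: given any countable $C\subseteq\lambda^\omega$, I must produce a finite $F$ with $h(F)=C$; this is where the hypothesis $\kappa=\lambda^{\aleph_0}$ does real work — a single well-chosen $g\in\lambda^\omega$ can be a ``code'' enumerating (via its values along $\omega$, re-encoded) the entire countable set $C$, so that $h(\{g\})=C$. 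To also hit the finite subsets of $\kappa$ I would arrange the coding so that constant-type $g$'s yield finite $h(\{g\})$; alternatively, enlarge $F$ to a two-element set, one element coding $C$ and contributing nothing extra.

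The main obstacle I anticipate is making the surjectivity argument genuinely watertight: I need a single finite set of functions $\lambda^\omega$ whose induced ``tails-of-initial-segments'' set is exactly a prescribed countable set $C$, and the bookkeeping of initial segments versus arbitrary countable sets is delicate. The clean way is: fix a surjection $e:\omega\to C$ (possible since $C$ is countable), and then fix $g\in\lambda^\omega$ encoding the sequence $(e(0),e(1),\dots)$ of elements of $\lambda^\omega$ — this uses $\lambda^{\aleph_0}\cdot\aleph_0=\lambda^{\aleph_0}$, i.e. $\lambda^\omega$ absorbs a countable sequence of its own members. Then define $h(\{g\})$ directly as $\{\text{the members }e(0),e(1),\dots\text{ decoded from }g\}=C$, and extend $h$ to arbitrary finite $F$ by taking unions of decodings; join-preservation is then immediate and surjectivity onto the countable subsets is by construction, while the finite subsets are recovered by decodings that happen to yield finite sets (arrange the encoding so that, say, repeating a single value forever decodes to a singleton, hence finite unions of such give all finite sets). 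The verification that this $h$ is well-defined and a homomorphism is then routine; the only subtlety, as noted, is confirming that every countable subset of $\kappa$ really is in the range, which is exactly the content of $\kappa=\lambda^{\aleph_0}$.
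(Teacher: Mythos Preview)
Your identification of the nonempty chain-generated ideals of $S$ with the countable subsets of $\kappa$ is correct (one small slip: this describes $\cid(S)$; the poset $\cId(S)$ has one further element, the empty ideal, sitting strictly below the ideal $\{\emptyset\}$ that corresponds to the empty subset). The verification that this poset is a lattice, via closure of countable sets under $\cap$ and $\cup$, is fine. Your parenthetical remark, however, is mistaken: in this example $\cId(S)$ \emph{is} a sublattice of $\Id(S)$, since the join in $\Id(S)$ of the ideals corresponding to subsets $A,B\subseteq\kappa$ is precisely the ideal corresponding to $A\cup B$, and this stays countable when $A$ and $B$ are. The presence of uncountable subsets elsewhere in $\Id(S)$ is irrelevant.

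For the surjection, your explicit encoding can be made to work, but you are doing this the hard way, and the false starts in your write-up reflect that. The paper's argument is essentially one line once the right structural fact is noticed: $S$, the finite subsets of $\kappa$ under union, is the \emph{free upper semilattice with $0$} on $\kappa$ generators. Hence any upper semilattice with $0$ of cardinality at most $\kappa$ is a homomorphic image of $S$: send the generators onto it by any surjection and extend by joins. All that remains is the cardinal arithmetic $|\cId(S)|=\kappa^{\aleph_0}=(\lambda^{\aleph_0})^{\aleph_0}=\lambda^{\aleph_0}=\kappa$. Your bijection $\lambda^\omega\cong(\lambda^\omega)^\omega$ is, in effect, a hand-built instance of exactly this: you are choosing one particular surjection from the $\kappa$ generators onto the $\kappa$ countable subsets and extending freely --- but without saying so. Recognizing the freeness of $S$ lets you drop all the bookkeeping about initial segments, zero-padding, and decoding, and also makes it transparent how to hit the extra bottom element of $\cId(S)$ (send $\emptyset\in S$ to the empty ideal).
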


\noindent
{\it Sketch of proof.}
Note that as an upper semilattice with $0,$ $S$ is free on $\kappa$
generators, and that it has no uncountable chains.
 From the latter fact one can verify that $\cid(S)$ is isomorphic to the
poset of all countable subsets of $\kappa,$ which is again a lattice,
and has cardinality $\kappa^{\aleph_0}.$
Hence $\cId(S)$ is also a lattice of that cardinality.
If $\kappa=\lambda^{\aleph_0},$
then $\kappa^{\aleph_0}=\kappa,$ so as an upper
semilattice, $\cId(S)$ is a homomorphic image of the free
upper-semilattice-with-$\!0\!$ on $\kappa$ generators,
namely~$S.$\qed\vspace{.5em}

But I do not know whether, if $L$ is a lattice such that $\cId(L)$ is
again a lattice, the latter can ever be a {\em lattice-theoretic}
homomorphic image of $L,$ or of a sublattice thereof.
\vspace{.5em}

As another way of tweaking our results, we might go back to
Theorem~\ref{T.xembcIdP}, and try replacing $P$ on the right side
of the map $f$ by an isotone
or (if $P$ is a lattice or upper semilattice)
a lattice- or semilattice-theoretic homomorphic image of $P$
--~the dual of our use of a subsemilattice $S_0$ on the left-hand
side of the map in Theorem~\ref{T.xsurjId}.
It turns out that the sort of statements one can express in this
way are weakened versions of statements of the sort exemplified by
Theorem~\ref{T.xsurjId}.
For to embed an algebraic structure $A$ in a homomorphic image
of a structure $B$ is equivalent to giving an isomorphism between
$A$ and a subalgebra of that homomorphic image of $B;$ and
the subalgebras of homomorphic images of $B$ are a subclass of
the homomorphic images of subalgebras of $B,$ so we end up looking
at homomorphisms from subalgebras of $B$ onto $A,$ as
in Theorem~\ref{T.xsurjId}.

So, for instance, it follows from Theorem~\ref{T.xsurjId} that
if we restrict Theorem~\ref{T.xembcIdP} to semilattices $S$ and
semilattice homomorphisms, and replace $\cId$ with $\Id,$
then we {\em can} replace the codomain $S$ of our map by an
arbitrary semilattice homomorphic image of $S.$
In the opposite direction, Lemma~\ref{L.*k^ha0}
shows that if we {\em keep} the operator $\cId$ in
Theorem~\ref{T.xembcIdP}, and again assume $P$
and $\cId(P)$ to be semilattices and restrict $f$
to be a semilattice homomorphism, we cannot replace the codomain
by such an image of itself.
(In this case, the distinction between ``subalgebra of a homomorphic
image'' and ``homomorphic image of a subalgebra'' makes no difference,
for two reasons: semilattices have the Congruence
Extension Property, and in that example, the subalgebra
was the whole semilattice anyway.
So our counterexample to the statement modeled on
Theorem~\ref{T.xsurjId} is indeed a counterexample to what would
otherwise be the weaker statement modeled on Theorem~\ref{T.xembcIdP}.)

For posets, one has many possible variants of our results,
because of the many sorts of maps one can define among them.
E.g., we found it natural to prove Theorem~\ref{T.xembcIdP}
for strictly isotone (but not necessarily one-to-one) maps;
while the authors of \cite{G+D} show that no
poset $P$ admits a {\em one-to-one} map $\r{Down}(P)\to P$ that is
{\em either} $\!\leq\!$-preserving (i.e., isotone), {\em or}
$\!\not\leq\!$-preserving.
By Lemma~\ref{L.*k^ha0}, one cannot, in Theorem~\ref{T.xembcIdP},
replace the codomain poset $P$ by a general isotone image of itself;
but such a result might be true for images of other sorts.

\section{$P_0\cong\dots\cong\,\id^n(P_n)\cong\dots$ can only
happen in ``the obvious way''.}\label{S.id^oo}

We have mentioned that by Ern\'e's generalization~\cite{MEr}
of a result of Higgs~\cite{DH}, the only posets $P$ admitting any
isomorphism with $\id(P)$ are those for which the canonical embedding
$P\to\id(P)$ is an isomorphism, namely the posets
with ascending chain condition.
We prove below a further generalization of this statement.
Rather than assuming an isomorphism between $P$ and its {\em own} ideal
poset, we shall see that it suffices to assume $P$ simultaneously
isomorphic to an ideal-poset $\id(P_1),$ a double ideal-poset
$\id^2(P_2),$ and generally to an $\!n\!$-fold ideal-poset
$\id^n(P_n)$ for each $n.$
I will give two proofs: one based on the ideas of Higgs' and Ern\'e's
proofs, and one that obtains the result from Ern\'e's (via a version
of the trick of Proposition~\ref{P.id_emb} above).

First, some terminology and notation.
Generalizing slightly the language of
\cite{MEr2}, let us call an element $x$ of a poset $P$
{\em compact} if for every directed subset $S\subseteq P$ which has a
least upper bound $\bigvee S$ in $P,$ and such that $\bigvee S\geq x,$
there is some $y\in S$ which already majorizes $x.$
For $P$ any poset, the compact elements of
$\id(P)$ are the principal ideals.
(These are clearly compact, while a nonprincipal ideal is the join of
the directed system $S$ of its principal, hence proper, subideals.)
Thus, defining $d_P:P\to\id(P)$ by
\begin{xlist}\item\label{x.d_P}
$d_P(x)\ =\ P\downarrow\{x\},$
\end{xlist}
the map $d_P$ is an isomorphism between $P$ and the
poset of compact elements of $\id(P).$
Since the set of compact elements of a poset is determined
by the order structure of the poset, this
shows that $P$ and the map $d_P:P\to\id(P),$ are
recoverable, up to isomorphism, from the order structure of $\id(P).$

\begin{lemma}\label{L.n-cmpct}
Let us call the compact elements of a poset $P$
the {\em $\!1\!$-compact} elements,
and inductively define the {\em $\!n\!$-compact} elements of $P$ to be
the elements of the subposet of $\!n{-}1\!$-compact elements that
are compact in that subposet.
Then in a poset of the form $\id^n(P)$ where $n>1,$ every
non-compact element $a_0$ yields a chain
\begin{xlist}\item\label{x.a_0-n-1}
$a_0\ <\ a_1\ <\ \dots\ <\ a_{n-1},$
\end{xlist}
where for $i=1,\dots,n-1,$ $a_i$ is the least $\!i\!$-compact element
of $\id^n(P)$ majorizing $a_{i-1}.$
\end{lemma}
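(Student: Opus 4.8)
The plan is to reduce the lemma to two facts about $\id$. First: for any poset $Q$, the compact elements of $\id(Q)$ are exactly its principal ideals, and $d_Q$ is an isomorphism of $Q$ onto that subposet (as recalled just before the lemma). Iterating this, the subposet $Q_i$ of $i$-compact elements of $\id^n(P)$ is isomorphic, for $0\le i\le n$, to $\id^{n-i}(P)$, via maps $\phi_i$ with $\phi_0$ the identity and $\phi_i=\phi_{i-1}\circ d_{\id^{n-i}(P)}$; this makes sense because $\id^k(P)$ has the form $\id(\,\cdot\,)$ whenever $k\ge1$. Under $\phi_{i-1}$ the subposet $Q_i\subseteq Q_{i-1}$ corresponds to the set of principal ideals of $\id^{n-i}(P)$. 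Second: the union of a directed family of upward directed downsets is again an upward directed downset, so in any $\id(Q)$ a directed subset $D$ has least upper bound $\bigcup D$.

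Starting from the given non-compact $a_0$, I would put $s_0=a_0$ and, for $i=1,\dots,n-1$, let $s_i=\bigcup s_{i-1}$, the union of the members of $s_{i-1}$ --- a legitimate set, since for $i\le n-1$ those members are downsets in $\id^{n-i-1}(P)$. Then $s_i\in\id^{n-i}(P)$, and by the second fact $s_i$ is the least upper bound in $\id^{n-i}(P)$ of the directed set $s_{i-1}$. Hence the least principal ideal of $\id^{n-i}(P)$ containing $s_{i-1}$ is $\id^{n-i}(P)\downarrow\{s_i\}$, and therefore $a_i:=\phi_i(s_i)=\phi_{i-1}\bigl(\id^{n-i}(P)\downarrow\{s_i\}\bigr)$ is precisely the least $i$-compact element of $\id^n(P)$ majorizing $a_{i-1}$; so the $a_i$ of the statement exist and are as described. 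Transporting back along $\phi_{i-1}$ gives $a_{i-1}\le a_i$ in all cases, with equality exactly when $s_{i-1}$ equals the principal ideal $\id^{n-i}(P)\downarrow\{s_i\}$, i.e.\ when $s_{i-1}$ is itself a principal ideal of $\id^{n-i}(P)$ --- equivalently, when $s_{i-1}$ is a compact element of $\id^{n-i+1}(P)$.

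So the lemma reduces to the claim that $s_i$ is \emph{not} compact in $\id^{n-i}(P)$ for $0\le i\le n-2$, which I would prove by induction on $i$; the case $i=0$ is the hypothesis on $a_0$. For the step, suppose $s_i$ were a principal ideal $\id^{n-i-1}(P)\downarrow\{c\}$. Then $c\in s_i=\bigcup s_{i-1}$, so $c$ lies in some member $I_0$ of $s_{i-1}$; since $I_0$ is a downset of $\id^{n-i-1}(P)$ with $c\in I_0\subseteq s_i=\id^{n-i-1}(P)\downarrow\{c\}$, in fact $I_0=\id^{n-i-1}(P)\downarrow\{c\}=s_i$. But $s_i$ contains every member of $s_{i-1}$, so $s_i$ is the greatest member of $s_{i-1}$, which forces $s_{i-1}=\id^{n-i}(P)\downarrow\{s_i\}$ to be a principal ideal of $\id^{n-i}(P)$, contradicting the inductive hypothesis.

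I expect this last induction --- propagating non-compactness through the successive unions $s_i=\bigcup s_{i-1}$ --- to be the crux; alongside it one must keep the index bookkeeping straight, and note in particular that the chain must stop at $a_{n-1}$, since forming $s_n$ would require a union inside $\id^0(P)=P$, whose members need not be sets. The remaining ingredients --- the identification $Q_i\cong\id^{n-i}(P)$ and the directed-union description of joins in $\id(\,\cdot\,)$ --- are routine.
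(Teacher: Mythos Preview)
Your proof is correct and follows essentially the same route as the paper's: your operation $s_i=\bigcup s_{i-1}$ is exactly the paper's $d_Q^{-1}(I)$ (with $Q=\id^{n-i-1}(P)$ and $I=s_{i-1}$), and your inductive argument that each $s_i$ is nonprincipal spells out what the paper asserts without proof in the sentence ``if $I$ is a nonprincipal ideal of $\id(Q)$, then $d_Q^{-1}(I)$ must be a nonprincipal ideal of~$Q$.'' The only difference is notational and in level of detail.
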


\begin{proof}
 From our preceding observations, we see that the $\!1\!$-compact
elements of $\id^n(P)$ are, in the notation of~(\ref{x.d_P}),
the members of
$d_{\id^{n-1}(P)}(\id^{n-1}(P)),$ the $\!2\!$-compact elements are the
members of $d_{\id^{n-1}(P)}d_{\id^{n-2}(P)}(\id^{n-2}(P)),$ and so on,
through the $\!n\!$-compact elements, which comprise the
set $d_{\id^{n-1}(P)}\dots d_{\id(P)}d_P(P).$

Note also that for any poset $Q,$ if $I$ is a nonprincipal ideal
of $\id(Q),$ then $d_Q^{-1}(I)$ i.e., $\{x\in Q\mid d_Q(x)\in I\},$
must be a nonprincipal ideal of $Q$ (though the converse is not true).
Moreover, that ideal, regarded as a member of $\id(Q),$ will majorize
all members of $I,$ and will be the least element that does so;
hence in $\id^2(Q),$ the element $d_{\id(Q)}d_Q^{-1}(I)$ will be
the least compact element majorizing the noncompact
element $I.$
Thus, in $\id^2(Q),$ every {\em noncompact} element has a least
compact element majorizing it, and that {\em compact} element is again
{\em noncompact} within the subposet of compact elements.

Hence in the situation of the lemma, where $a_0$ is a {\em noncompact}
element of an $\!n\!$-fold ideal poset $\id^n(P),$ we have a
least compact element $a_1$ majorizing it, which is the image
under $d_{\id^{n-1}(P)}$ of a {\em noncompact} element
of $\id^{n-1}(P),$ for which we can repeat the argument if $n-1>1,$
giving the desired chain~(\ref{x.a_0-n-1}).
\end{proof}

\begin{theorem}[{cf.\ \cite{DH}, \cite{MEr}}]\label{T.id^n}
Suppose $P_0$ is a poset such that for each natural number $n$ there
exists a poset $P_n$ with $P_0\,\cong\,\id^n(P_n).$
Then $P_0$ has ascending chain condition.
\end{theorem}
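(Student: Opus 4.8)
The plan is to deduce this from Ern\'e's theorem \cite{MEr} --- that a poset isomorphic to its own ideal poset has the ascending chain condition --- by building a two-sided-infinite version of the disjoint-union construction used in the proof of Proposition~\ref{P.id_emb}.

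The first step is to promote the hypothesis to a genuine chain of ideal-poset relations. Since $d_Q$ identifies any poset $Q$ with the subposet of compact elements of $\id(Q)$, and the set of compact elements of a poset is determined by its order structure, the operator $\id$ is injective on isomorphism classes: $\id(A)\cong\id(B)$ implies $A\cong B$, and hence $\id^{k}(A)\cong\id^{k}(B)$ implies $A\cong B$. Applying this to the identity $\id^{n-1}\bigl(\id(P_n)\bigr)=\id^{n}(P_n)\cong P_0\cong\id^{n-1}(P_{n-1})$ yields $\id(P_n)\cong P_{n-1}$ for every $n\geq1$. (The same thing follows from the proof of Lemma~\ref{L.n-cmpct}, which displays the $i$-fold compact elements of $P_0\cong\id^{n}(P_n)$ as a copy of $\id^{n-i}(P_n)$; take $n=i$ and then $n=i+1$.)

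Next I would extend the family to all of $\mathbb{Z}$ by setting $P_{-m}:=\id^{m}(P_0)$ for $m\geq0$, so that $\id(P_{i+1})\cong P_i$ for every $i\in\mathbb{Z}$ (the case $i\geq0$ is the step just done, $i=-1$ is the definition, and $i\leq-2$ holds because $\id(\id^{-i-1}(P_0))=\id^{-i}(P_0)$). Then, with $Q:=\bigsqcup_{i\in\mathbb{Z}}P_i$ the disjoint union carrying no order relations between distinct pieces, I would note that a nonempty directed downset of such a disjoint union is contained in a single piece, where it is a nonempty ideal, and that ideals lying in different pieces are incomparable; hence $\id(Q)\cong\bigsqcup_{i\in\mathbb{Z}}\id(P_i)\cong\bigsqcup_{i\in\mathbb{Z}}P_{i-1}=Q$. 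By Ern\'e's theorem $Q$ has the ascending chain condition, and since $P_0$ sits inside $Q$ as a subposet, so does $P_0$.

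The one place I expect to have to be careful is the first step --- turning the unindexed collection of isomorphisms $\id^{n}(P_n)\cong P_0$ into an actual sequence $\cdots,P_2,P_1,P_0$ with $\id(P_n)\cong P_{n-1}$ --- because it is exactly such a shiftable sequence that allows the two-sided-infinite disjoint union to reproduce itself under $\id$; after that the argument is bookkeeping. A more self-contained proof, modeled on those of Higgs and Ern\'e, would instead use Lemma~\ref{L.n-cmpct} at all finite stages simultaneously to build, from a single noncompact element of $P_0$, an infinite chain $a_0<a_1<a_2<\cdots$ in which $a_i$ is the least $i$-compact element majorizing $a_{i-1}$, and to extract a contradiction from it; but the route sketched above is shorter.
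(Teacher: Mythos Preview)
Your proposal is correct and is essentially the paper's second proof: the paper, too, deduces $\id(P_n)\cong P_{n-1}$ from the compact-element description, extends to negative indices via $P_{-n}=\id^n(P_0)$, forms the disjoint union $Q=\coprod_{n\in\mathbb{Z}}P_n$, checks $\id(Q)\cong Q$, and invokes Ern\'e. The alternative you sketch at the end --- choosing a maximal noncompact element and using Lemma~\ref{L.n-cmpct} at every finite stage to produce an infinite ascending chain above it --- is the paper's first proof.
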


\noindent
{\it Proof 1.}
For notational simplicity, let us assume
without loss of generality that $P_0=\id(P_1).$
If $P_0$ does not have ascending chain condition, then
the poset $P_1$ clearly cannot have ascending chain
condition either; hence it has a nonprincipal ideal,
hence by Zorn's Lemma we can find a maximal nonprincipal
ideal, so $P_0$ will have a maximal noncompact element $a_0.$
Applying the preceding lemma for all positive integers $n,$ we get
an infinite chain
\begin{xlist}\item\label{x.a_0-oo}
$a_0\ <\ a_1\ <\ \dots\ <\ a_n\ <\ \dots\ .$
\end{xlist}
These form an infinite chain of
ideals of $P_1$ above $a_0,$
and the union of this chain will be a nonprincipal ideal strictly
larger than $a_0,$ contradicting the assumed maximality.\\[.5em]
{\it Proof 2.}
By the observations at the beginning of the proof
of Lemma~\ref{L.n-cmpct}, for each $n>0$ the posets of
$\!n{-}1\!$-compact elements and of $\!n\!$-compact elements of
$P_0\cong\id^n(P_n)$ are isomorphic respectively
to $\id(P_n)$ and to $P_n;$ comparing these statements for two
successive values of $n,$ we conclude that $\id(P_n)\cong P_{n-1}.$
This suggests that we extend the sequence of posets $P_n$
to allow negative subscripts by writing $P_{-n}=\id^n(P_0).$
Now let $Q$ be the disjoint union $\coprod_{n\in\mathbb{Z}}P_n,$
where elements from distinct posets $P_n$ are taken to be incomparable.
No ideal of $Q$ can contain elements of more than one of the $P_n,$
hence
\begin{xlist}\item[]
$\id(Q)\ =\ \coprod_{n\in\mathbb{Z}}\id(P_n)\ \cong
\ \coprod_{n\in\mathbb{Z}}P_{n-1}\ =
\ \coprod_{m\in\mathbb{Z}}P_m\ =\ Q.$
\end{xlist}

Hence by Ern\'e's result, $Q$ has ascending chain
condition; hence so does $P_0\subseteq\nolinebreak Q.$\qed\vspace{.5em}

It is interesting to compare the situation of the preceding theorem
with what we get if we start with any poset $P$ with a nonprincipal
ideal $I,$ and consider the posets
\begin{xlist}\item\label{x.P->id->}
$P\ \to\ \id(P)\ \to\ \id^2(P)\ \to\ \dots\ ,$
\end{xlist}
with connecting maps $d_{\id^{n-1}(P)}: \id^{n-1}(P)\to\id^n(P).$
Here $I$ can be regarded as an element $b_1\in\id(P),$ which is the
least upper bound therein of the set $d_P(I).$
On the other hand, the ideal of $\id(P)$ generated by
that set, since it consists of elements $<b_1,$ can be regarded as
an element $b_2\in\id^2(P)$ which is $<d_{\id(P)}(b_1);$
this element in turn will strictly majorize all elements
of $d_{\id(P)}d_P(I),$ and so the ideal generated by
that set in $\id^2(P)$ will be an element of $\id^3(P)$
which is $<d_{\id^2(P)}(b_2);$ and so on.
Letting $P_\infty$ denote the direct limit of~(\ref{x.P->id->}), and
writing $d_{\infty,n}: \id^n(P)\to P_\infty$ for the induced maps
to that object, we get a {\em descending} chain
$d_{\infty,1}(b_1)>d_{\infty,2}(b_2)>\dots$ above the
set $d_{\infty,0}(I)$ in $P_\infty.$
On the other hand, if we pause after $n$ steps, and consider the
chain $d_{\id^n(P)}\dots d_{\id(P)}(b_1)>
d_{\id^n(P)}\dots d_{\id^2(P)}(b_2)>\dots>b_n,$ this is essentially
the finite chain described in Lemma~\ref{L.n-cmpct}, used there
in building up the {\em ascending} chain~(\ref{x.a_0-oo}).

I don't know whether the analog of Theorem~\ref{T.id^n}
with $\cid$ in place of $\id$ is true.
(This seems related to the problem stated at the end of~\cite{MEr}.)
The natural approach to adapting the above argument to that case
would start by defining an element $x$ of a poset to be
{\em chain-compact} if
every chain $S$ having a least upper bound $\bigvee S$
which majorizes $x$ contains an element $s$ that already does so.
However, it turns out that elements of
$d_P(P)\subseteq\cid(P)$ are not necessarily chain-compact:
If, slightly modifying the example by which we showed in \S\ref{S.def}
that $\cId$ of a lattice need not be a lattice, we let
$P=(\omega\times\omega_1)\cup\{(\omega,\omega_1)\},$
i.e., that original example, with
the chains $\omega\times\{\omega_1\}$ and $\{\omega\}\times\omega_1$
deleted, but the top element $(\omega,\omega_1)$ retained, we find
that $\cid(P)$ can be identified with $(\omega+1)\times(\omega_1+1),$
in which the image of that top element is the least upper bound
of each of the chains $\omega\times\{\omega_1\}$ and
$\{\omega\}\times\omega_1,$ hence not chain-compact.
(Though it happens that the top element {\em was} chain-compact in $P.)$
This is related to the
fact that the inverse image under $d_P$ of the ideal generated
by either of these chains is a non-chain-generated ideal of $P.$
One encounters similar phenomena on taking for $P$ the poset of finite
subsets of a set of cardinality $\aleph_1,$ together with
the improper subset.

These examples used uncountable chains; might
the analog of Theorem~\ref{T.id^n} hold with $\id$ replaced
by the operator taking $P$ to its poset of nonempty ideals generated by
countable chains; equivalently, nonempty ideals with
countable cofinal subsets?
Example~3 of \cite{MEr} shows that this, too, fails: the poset $P$ of
that example, the totally ordered set $\omega_1,$
is easily seen to be isomorphic its own poset of countably
generated ideals, equivalently, bounded ideals (whether or not
we include the empty ideal).
The reason proof~1 of Theorem~\ref{T.id^n} fails to give a
contradiction in this case lies not in the phenomena
sketched above (indeed, the inverse image
in $\omega_1$ of a bounded ideal of $\cid(\omega_1)$
will again be a bounded ideal), but in the fact
that Zorn's lemma cannot produce a {\em maximal} bounded ideal.

\vspace{.5em}
A question suggested by juxtaposing the present considerations with
those of \cite[\S7]{P_vs_cP} is:  What can be said about lattices $L$
such that $\id(L)$ is (not necessarily equal to, but at least)
{\em finitely generated over} its sublattice $d_L(L);$
and similarly for upper semilattices?
(In these questions it makes no difference whether we
refer to $\id(L)$ or $\Id(L).)$

\vspace{.5em}
Since dropping the bottom element $\emptyset$ from $\Id(P)$ makes
such a difference in the properties we have studied,
it might be interesting to investigate
the effect on these questions of dropping the top element, $P,$
of $\id(P)$ or $\Id(P)$ if $P$ is a directed poset (e.g., a
lattice or semilattice); or of adding an
extra top element; though these constructions are admittedly
less natural than that of dropping $\emptyset.$
One might also investigate the variants of some of the questions
we have considered that one gets by using the opposite structures,
$\Id(P)^\r{op}$ etc., in place of $\Id(P)$ etc..

\vspace{.5em}
I will mention one other interesting result on the relation
between $L$ and $\id(L)$ for any lattice $L$: It is shown in
\cite{B+H} that $\id(L)$ is a homomorphic
image of a sublattice of an ultrapower of $L.$

\section{Appendix: On chains, and products of chains.}\label{S.chains}

We noted in \S\ref{S.def} that for $L$ a lattice, the poset
$\cId(L)$ of ideals of $L$ generated by chains need neither be an
upper nor a lower semilattice; our counterexample was based
on the fact that a direct product of two chains of
distinct infinite cofinalities has no cofinal subchain.
Let us put this phenomenon in a more general light.

\begin{lemma}\label{L.cof+prods}
Let $X$ be a class of posets, and for any poset $P,$
let $X\hy\r{Down}(P)\subseteq\r{Down}(P)$ denote the
set of all downsets $d\subseteq P$ of the form $d=P\downarrow f(Q),$
for $Q\in X$ and $f:Q\to P$ an isotone map.
Then\\[.5em]
\textup{(i)} The following conditions on $X$ are equivalent.

\vspace{.2em}
\textup{(i.a)} \ $X\hy\r{Down}(P)\,\subseteq\,\Id(P)$ for
all posets $P.$

\textup{(i.b)} \ Every member of $X$ is upward directed.%
\\[.5em]
\textup{(ii)} Among the following conditions on $X,$
we have the implication \textup{(ii.a)}$\implies$\textup{(ii.b)},
and, if the equivalent con\-ditions of \textup{(i)} above hold, also
\textup{(ii.a)$\implies$(ii.c)$\wedge$(ii.d)}.

\vspace{.2em}
\textup{(ii.a)} \ For all $Q,\,Q'\in X,$ there exists
$R\in X$ which admits an isotone map to
the product poset $Q\times Q',$ with cofinal image.

\textup{(ii.b)} \ $X\hy\r{Down}(S)$ is a lower
subsemilattice of $\r{Down}(S)$ for all lower semilattices $S.$

\textup{(ii.c\<)} \ $X\hy\r{Down}(S)$ is an upper subsemilattice
of $\Id(S)$ for all upper semilattices $S.$

\textup{(ii.d)} \ $X\hy\r{Down}(L)$ is a
sublattice of $\Id(L)$ for all lattices $L.$
\end{lemma}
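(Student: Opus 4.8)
The plan is to handle parts~(i) and~(ii) in turn, isolating for part~(ii) a single reusable observation. For part~(i), the implication (i.b)$\implies$(i.a) is immediate: if $Q$ is upward directed and $f:Q\to P$ is isotone, then $f(Q)$ is again upward directed, and the downset $P\downarrow f(Q)$ generated by an upward directed subset of $P$ is itself upward directed -- given $x\le f(q)$ and $y\le f(q')$, choose $q''\in Q$ majorizing $q$ and $q'$ and note $f(q'')\ge x,y$ -- so $P\downarrow f(Q)\in\Id(P).$ Conversely, for (i.a)$\implies$(i.b), apply~(i.a) with $P=Q$ and $f$ the identity map: then $P\downarrow f(Q)=Q,$ so $Q\in\Id(Q),$ i.e., $Q$ is upward directed.

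For part~(ii) I would first note that a product $Q\times Q'$ of upward directed posets is upward directed, and then prove the key sublemma: \emph{if \textup{(ii.a)} holds, $Q,Q'\in X,$ $S$ is any poset, and $h:Q\times Q'\to S$ is isotone, then $S\downarrow h(Q\times Q')\in X\hy\r{Down}(S).$} Indeed, taking $R\in X$ and an isotone $e:R\to Q\times Q'$ with cofinal image as furnished by~(ii.a), the composite $h\circ e:R\to S$ is isotone, and cofinality of $e(R)$ in $Q\times Q'$ together with isotonicity of $h$ force $S\downarrow(h\circ e)(R)=S\downarrow h(Q\times Q');$ thus this downset has the required form. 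This is the one spot calling for a little care: the cofinality clause of~(ii.a) is precisely what turns an obvious inclusion into the equality of downsets we need.

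The three implications of~(ii) then reduce to choosing the right $h$ and invoking the sublemma. For (ii.a)$\implies$(ii.b), with $S$ a lower semilattice and $d=S\downarrow f(Q),$ $d'=S\downarrow f'(Q')$ in $X\hy\r{Down}(S),$ one checks that the intersection $d\cap d'$ -- which is the meet of $d$ and $d'$ in $\r{Down}(S)$ -- equals $S\downarrow h(Q\times Q')$ for $h(q,q')=f(q)\wedge f'(q'),$ using only that ``$s\le f(q)$ and $s\le f'(q')$'' is equivalent to ``$s\le f(q)\wedge f'(q')$'' in a lower semilattice; the sublemma then finishes. For (ii.a)$\wedge$(i)$\implies$(ii.c), hypothesis~(i) first gives $X\hy\r{Down}(S)\subseteq\Id(S)$ for $S$ an upper semilattice, so that ``upper subsemilattice of $\Id(S)$'' is meaningful; then for $d,d'$ as above -- setting aside the trivial case where some $Q$ is empty, so the corresponding $d$ equals $\emptyset$ and the join is the other factor -- one verifies that $S\downarrow h(Q\times Q')$ with $h(q,q')=f(q)\vee f'(q')$ is an ideal (by (i.b)$\implies$(i.a), since $Q\times Q'$ is upward directed) containing $d\cup d'$ and contained in every ideal containing $d\cup d',$ hence equal to the join $d\vee d'$ formed in $\Id(S);$ the sublemma again finishes. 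Finally (ii.a)$\wedge$(i)$\implies$(ii.d) is the conjunction of the two preceding arguments with $S=L,$ together with the standard fact that the intersection of two ideals of a lattice is an ideal, so that $d\cap d'$ is at once the meet of $d,d'$ in $\r{Down}(L)$ and the meet in $\Id(L).$

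I do not expect a genuine obstacle: the mathematical content sits in the sublemma, and the rest is the routine translation of ``meet'' and ``join'' in $\r{Down}$ or $\Id$ into operations on generating sets. The points most deserving of attention are organizational -- keeping track that only~(ii.c) and~(ii.d), the ones mentioning $\Id,$ actually use hypothesis~(i); disposing of the degenerate case of an empty member of $X$ when computing joins; and using the cofinality clause of~(ii.a) at full strength.
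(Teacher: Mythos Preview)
Your proposal is correct and follows essentially the same route as the paper: part~(i) is handled identically (your direct argument is the contrapositive of the paper's), and for part~(ii) both you and the paper observe that $d\cap d'$ and $d\vee d'$ are the downsets generated by the images of the isotone maps $(q,q')\mapsto f(q)\wedge f'(q')$ and $(q,q')\mapsto f(q)\vee f'(q')$ from $Q\times Q'$, then precompose with the cofinal map $R\to Q\times Q'$ supplied by~(ii.a). Your packaging of this last step as a separate sublemma, and your explicit treatment of the empty-$Q$ case for joins, are slightly more careful than the paper's write-up but do not constitute a different approach.
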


\begin{proof}
Since an isotone image of an upward directed set is upward directed,
and the downset generated by an upward directed set is an ideal,
we clearly have (i.b)$\implies$(i.a).
Conversely, if~(i.b) fails, let $Q\in X$ not be upward directed.
Then $Q=(Q\downarrow Q)\in X\hy\r{Down}(Q)$ is
not an ideal, so~(i.a) fails.

To get~(ii), consider any $Q,\,Q'\in X,$ any poset $P,$
and any isotone maps $f:Q\to P,$ $f':Q'\to P.$
If our $P$ is a lower semilattice, then the intersection
of downsets $(P\downarrow f(Q))\cap(P\downarrow f'(Q'))$ can be
described as $P\downarrow\{f(q)\wedge f'(q')\mid q\in Q,\ q'\in Q'\},$
while if $P$ is an upper semilattice and $P\downarrow f(Q)$ and
$P\downarrow f'(Q')$ are ideals, then their join in $\Id(P)$ can
be described as $P\downarrow\{f(q)\vee f'(q')\mid q\in Q,\ q'\in Q'\}.$
In these statements, note that the sets
$\{f(q)\wedge f'(q')\mid q\in Q,\ q'\in Q'\},$
respectively $\{f(q)\vee f'(q')\mid q\in Q,\ q'\in Q'\},$
are isotone images of the poset $Q\times Q'.$
Hence if $X$ contains a poset $R$ which admits an isotone map
$g:R\to Q\times Q'$ with cofinal image, the composite of $g$ with
the above map $Q\times Q'\to P$ is a map $R\to P$ whose image
generates the indicated meet-downset or join-ideal respectively.
This gives (ii.a)$\implies$(ii.b), and, assuming the conditions
of~(i), also (ii.a)$\implies$(ii.c);
together these give (ii.a)$\implies$(ii.d).
\end{proof}

To avoid awkward statements, I have not attempted to formulate
if-and-only-if versions of the implications of~(ii).
That the converses to the present statements do not hold arises
from the fact that on members of $X$ we are only assuming a poset
structure, but we are mapping them
into sets with lattice or semilattice structure.
For instance, since in a lower semilattice $S$ every downset is a
connected poset, we see that the class $C$ of all connected posets
satisfies $C\,\hy\r{Down}(S)=\r{Down}(S);$
hence taking $X=C\<\cup\{Q\},$ where $Q$ is the disconnected
poset consisting of two incomparable elements, we find that
$X\hy\r{Down}(S)$ is still $\r{Down}(S),$ so~(ii.b) holds.
But~(ii.a) does not, since no member of $X$ can be mapped into
$Q\times Q$ so as to have cofinal image.

However, the above lemma shows why the choice for $X$ of
the class of all chains (or even the set consisting
of the two chains $\omega$ and $\omega_1)$ can
fail to have properties (ii.c) and~(ii.d), and points to
some variants that will have those properties.
Any class of upward directed posets closed under
taking pairwise products will satisfy (i.b) and~(ii.a), and
hence (ii.b)-(ii.d); in particular, this will be true of the class
of all finite products of chains (cf.\ \cite{MP}).
A singleton whose one member is a chain, $Q,$
will also satisfy these properties, since the diagonal image of $Q$
in $Q\times Q$ is cofinal.
Both of these classes yield variants of the construction $\cId$ that
are, in this respect, better behaved than that construction.


\vspace{.5em}
\noindent
Department of Mathematics\\
University of California\\
Berkeley, CA 94720-3840\\
USA\\
gbergman@math.berkeley.edu
\end{document}